\newtheorem{theorem}{Theorem}[section]
\newtheorem{lemma}[theorem]{Lemma}
\numberwithin{equation}{section}
\newcommand{\Der}{\mathrm{Der}}
\begin{document}

\begin{frontmatter}



\title{A Note on Darboux Polynomials of Monomial Derivations\tnoteref{label1}}

\tnotetext[label1]{Supported by
the Youth Research Funds from
Liaoning University under Grant No. LDQN201428.}

\author{Jiantao Li}
\ead{jtlimath@gmail.com}
\address{School of Mathematics, Liaoning University, Shenyang, 110031, China}
\begin{abstract}
We study a monomial derivation $d$ proposed by J. Moulin Ollagnier and A. Nowicki in the polynomial ring of four variables, and prove that $d$ has no Darboux polynomials if and only if $d$ has a trivial field of constants.
\end{abstract}

\begin{keyword}
Derivation \sep  Darboux polynomial\sep Ring of constant \MSC[2010] 13N15 \sep 12H05
\end{keyword}

\end{frontmatter}


\section{Introduction}

Throughout this paper, let $k[X]=k[x_1, x_2, \ldots, x_n]$ denote
the polynomial ring over a field $k$ of characteristic 0.

 A derivation
$d=f_1\frac{\partial}{\partial
x_1}+\cdots+f_n\frac{\partial}{\partial x_n}$ of $k[X]$ is said to
be a monomial derivation if each  $f_i$ is a monomial in $k[X]$. By
a Darboux polynomial of $d$ we mean a polynomial $F\in k[X]$ such
that $F\notin k$ and $d(F)=\Lambda F$ for some $\Lambda \in k[X]$.

Derivations and Darboux polynomials are useful algebraic methods to
study polynomial or rational differential systems. If we associate a
polynomial differential system $\frac{d}{dt}x_i=f_i,\ i=1,\ldots,n,$
with a derivation $d=f_1\frac{\partial}{\partial
x_1}+\cdots+f_n\frac{\partial}{\partial x_n}$, then the existence of
Darboux polynomials for $d$ is a necessary condition for the system to have
a first integral (see \cite{Mac1,Mac2,Mou}). Darboux polynomials also have important applications in
many branches of mathematics.  The famous Jacobian conjecture for $k[X]$ is
equivalent to the assertion that $\frac{\partial}{\partial
x_1},\ldots,\frac{\partial}{\partial
x_n}$ is,
apart from a polynomial coordinate change, the only commutative
$k[X]$-basis of $\Der_k~k[X]$. It is proved that $n$ pairwise commuting derivations form a commutative basis if and only if they are $k$-linearly independent and have no common Darboux polynomials \cite{Li}. 

The most famous derivation without Darboux polynomials may be the Jouanolou derivation, there are several different proofs on the fact that Jouanolou derivations have no Darboux polynomials, see\cite{Mou2,Zoladek}. It is obvious that if $d$ is without Darboux polynomials, then the field $k(X)^d$ is trivial. The opposite implication is, in general, not true. In \cite{NowickiZ06}, there is a full description of all monomial derivations of $k[x,y,z]$ with trivial field of constants. Using this description and several additional facts, Moulin-Ollagnier and Nowicki present full lists of homogeneous monomial derivations of degrees $s\leq4$ (of $k[x,y,z]$) without Darboux polynomials in \cite{MoulinN08} and then in \cite{MoulinN11}, they prove that a monomial derivation $d$ (of $k[x,y,z]$) has no Darboux polynomials if and only if $d$ has a trivial field of constants and $x_i \nmid d(x_i)$ for all $i=1,\dots,n$.

More precisely, look at a monomial derivation $d$ of $k[X]$ with $d(x_i)= x_1^{\beta_{i1}}\cdots x_n^{\beta_{in}}$
for $i=1,\dots,n$ and  each $\beta_{ij}$ is a non-negative integer. In this case, $d$ is said to be normal monomial if $\beta_{11}=\beta_{22}=\cdots=\beta_{nn} = 0$ and $w_d\neq0$, where $w_d$ is the determinant of the matrix $[\beta_{ij}]-I$. In \cite{MoulinN11}, it is proved that if $d$ is a normal monomial derivation of $k[X]$, then $d$ is without Darboux polynomials if and only if $k(X)^d = k$. What happens if $w_d=0$? In \cite{MoulinN11}, a monomial derivation $d$  of $k[x,y,z,t]$  with  $w_d=0$ defined by $$d(x)=t^2, d(y)=zt, d(z)=y^2, d(t)=xy$$ is proposed. In this note, we prove that $d$ has no Darboux polynomial if and only if $d$ has a trivial field of constant.

\section{Main Results}

Now we recall some lemmas related to Darboux polynomials of polynomial derivations. Denote by $A_\gamma^{(s)}$  the group of all  $\gamma$-homogeneous polynomials of degree $s$ in $k[X]$. Then $k[X]$ becomes a $\gamma$-graded ring $k[X]=\oplus_{s\in\mathbb{Z}}A_\gamma^{(s)}$. Recall that $D$ is said to be a
$\gamma$-homogeneous derivation of degree $s$ if $D(A_\gamma^{(p)})\subseteq A_\gamma^{(s+p)}$ for any $p\in \mathbb{Z}$.
\begin{lemma}\textup{\cite[Proposition 2.2.1]{Nowicki1994}}
Let~$f$ be a Darboux polynomial of $D$. Then all factors of $f$ are also Darboux polynomials of $D$.
\end{lemma}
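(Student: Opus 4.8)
The plan is to reduce the statement to the case of a single irreducible factor and then to extract a polynomial cofactor for that factor directly from the Leibniz rule. First I would record the elementary but essential closure property: if $g_1$ and $g_2$ are Darboux polynomials of $D$ with cofactors $\Lambda_1$ and $\Lambda_2$, then $D(g_1 g_2)=D(g_1)g_2+g_1 D(g_2)=(\Lambda_1+\Lambda_2)g_1 g_2$, so $g_1 g_2$ is again a Darboux polynomial, with cofactor $\Lambda_1+\Lambda_2$; in particular every power $p^e$ of a Darboux polynomial $p$ is a Darboux polynomial, with cofactor $e\Lambda$, and multiplying by a unit $c\in k^*$ changes nothing since $D(cp)=\Lambda(cp)$. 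Because $k[X]$ is a unique factorization domain, it therefore suffices to prove that each irreducible factor of $f$ is a Darboux polynomial: any nonconstant factor of $f$ is, up to a unit, a product of irreducible factors of $f$ raised to positive powers, and the closure property then makes it a Darboux polynomial as well.

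For the core step I would fix an irreducible factor $p$ of $f$ and write $f=p^m g$, where $m\geq 1$ is the multiplicity of $p$ in $f$ and $p\nmid g$. Applying $D$ to $D(f)=\Lambda f$ and using the Leibniz rule gives
$$m\,p^{m-1}D(p)\,g + p^m D(g) = \Lambda\,p^m g.$$
Since $m\geq 1$, every term is divisible by $p^{m-1}$ in the domain $k[X]$; dividing through by $p^{m-1}$ and rearranging yields
$$m\,D(p)\,g = p\bigl(\Lambda g - D(g)\bigr),$$
so that $p$ divides $m\,D(p)\,g$.

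Now I would invoke primality. Because $\mathrm{char}\,k=0$, the positive integer $m$ maps to a nonzero, hence invertible, element of $k$, so $p\nmid m$; and $p\nmid g$ by construction. As $p$ is prime, it follows that $p\mid D(p)$, that is, $D(p)=\Lambda_p\,p$ with $\Lambda_p:=D(p)/p\in k[X]$. Since $p$ is irreducible it is nonconstant, so $p\notin k$, and therefore $p$ is a Darboux polynomial of $D$ with cofactor $\Lambda_p$. Combined with the closure property of the first paragraph, this proves the lemma.

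The argument is short, and the one point to handle with care is the interplay between the multiplicity $m$ and the characteristic: it is precisely the hypothesis $\mathrm{char}\,k=0$, guaranteeing $m\neq 0$ in $k$, that prevents the factor $m\,D(p)\,g$ from degenerating and lets primality deliver $p\mid D(p)$. I would also note the harmless convention that "factor" is read as "nonconstant factor," since constants are excluded from the notion of a Darboux polynomial by definition.
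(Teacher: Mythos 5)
Your proof is correct, and it is essentially the canonical argument: the paper itself offers no proof of this lemma (it is quoted from Nowicki's book, Proposition 2.2.1), and the standard proof there proceeds exactly as you do --- reduce via unique factorization to a single irreducible factor $p$ of multiplicity $m$, apply the Leibniz rule to $D(p^m g)=\Lambda p^m g$, cancel $p^{m-1}$, and use primality of $p$ together with $\mathrm{char}\,k=0$ (so $m\neq 0$ in $k$) to conclude $p\mid D(p)$, then reassemble arbitrary factors with the additivity of cofactors under products. Your explicit flagging of the two delicate points --- that $m$ must be invertible in $k$, and that ``factor'' means nonconstant factor since constants are excluded from the definition of a Darboux polynomial --- is exactly right.
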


\begin{lemma}\textup{\cite[Proposition 2.2.3]{Nowicki1994}}\label{C3Darboux homo}
Let $D$ be a $\gamma$-homogeneous derivation of degree $s$ and  $f$  be a Darboux polynomial of $D$ and $\lambda$ be a polynomial eigenvalue of $f$ with respect to $D$. Then $\lambda$ is a $\gamma$-homogeneous polynomial of degree $s$, and every $\gamma$-homogeneous component of $f$ is also a Darboux polynomial of $D$ with polynomial eigenvalue  $\lambda$.
\end{lemma}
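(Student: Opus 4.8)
The plan is to work directly with the $\gamma$-graded decompositions of both $f$ and its eigenvalue $\lambda$, and to extract the conclusion by comparing the extreme (highest and lowest degree) homogeneous components on the two sides of the Darboux equation $D(f)=\lambda f$. Write $f=\sum_{p}f_p$ and $\lambda=\sum_{q}\lambda_q$ with $f_p\in A_\gamma^{(p)}$ and $\lambda_q\in A_\gamma^{(q)}$, only finitely many nonzero. Since $D$ is $\gamma$-homogeneous of degree $s$, we have $D(f_p)\in A_\gamma^{(s+p)}$, so the degree-$r$ component of $D(f)$ is exactly $D(f_{r-s})$, while the degree-$r$ component of $\lambda f$ is $\sum_{q+p=r}\lambda_q f_p$. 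Equating $\gamma$-homogeneous components of $D(f)=\lambda f$ therefore yields, for every $r$,
\begin{equation}
D(f_{r-s})=\sum_{p}\lambda_{r-p}\,f_p. \tag{$*$}
\end{equation}

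First I would show that $\lambda$ is $\gamma$-homogeneous of degree $s$. Assume $\lambda\neq0$ (the case $\lambda=0$ is immediate from $(*)$, which then forces $D(f_p)=0$ for all $p$). Let $p_{\min}\le p_{\max}$ be the extreme degrees with $f_p\neq0$, and $q_{\min}\le q_{\max}$ those with $\lambda_q\neq0$. Because $k[X]$ is an integral domain, the top component of $\lambda f$ is $\lambda_{q_{\max}}f_{p_{\max}}\neq0$, sitting in degree $q_{\max}+p_{\max}$; reading $(*)$ at $r=q_{\max}+p_{\max}$ forces $f_{q_{\max}+p_{\max}-s}\neq0$, whence $q_{\max}+p_{\max}-s\le p_{\max}$, i.e.\ $q_{\max}\le s$. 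Symmetrically, the bottom component $\lambda_{q_{\min}}f_{p_{\min}}\neq0$ lives in degree $q_{\min}+p_{\min}$ and forces $q_{\min}+p_{\min}-s\ge p_{\min}$, i.e.\ $q_{\min}\ge s$. Combining these gives $s\le q_{\min}\le q_{\max}\le s$, so $q_{\min}=q_{\max}=s$ and $\lambda=\lambda_s\in A_\gamma^{(s)}$.

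With $\lambda$ now homogeneous of degree $s$, the right-hand side of $(*)$ collapses to the single surviving term $\lambda f_{r-s}$, giving $D(f_{r-s})=\lambda f_{r-s}$ for every $r$, equivalently $D(f_p)=\lambda f_p$ for each $p$. Hence every nonzero $\gamma$-homogeneous component $f_p$ satisfies the same Darboux equation with the same eigenvalue $\lambda$, which is the assertion. (If $\lambda\neq0$, the relation $0=D(f_0)=\lambda f_0$ additionally rules out a nonzero constant component, so the surviving components are genuine Darboux polynomials.)

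I expect the only delicate point to be the degree bookkeeping in the middle step: one must invoke that $k[X]$ is a domain to guarantee that the extreme products $\lambda_{q_{\max}}f_{p_{\max}}$ and $\lambda_{q_{\min}}f_{p_{\min}}$ are nonzero, so that the top and bottom degrees of $\lambda f$ are genuinely $q_{\max}+p_{\max}$ and $q_{\min}+p_{\min}$, and one must dispose of the degenerate case $\lambda=0$ separately. Everything else reduces to a routine matching of homogeneous components degree by degree.
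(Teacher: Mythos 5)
Your proof is correct, but note that the paper itself offers no proof of this lemma: it is quoted verbatim from \cite[Proposition 2.2.3]{Nowicki1994}, and your degree-by-degree comparison of homogeneous components (bounding $q_{\min}$ and $q_{\max}$ by $s$ via the extreme components of $\lambda f$, using that $k[X]$ is a domain) is essentially the standard argument given there. The only blemish is your final parenthetical: for a general weight vector $\gamma$ the piece $A_\gamma^{(0)}$ need not consist of constants (some weights may be zero), so $D(f_0)=0$ is not automatic; this side remark is not needed for the lemma as stated and can simply be dropped.
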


Now consider a monomial derivation $d$ defined by
$d(x_i)=X^{\beta_i}$, where $\beta_i=(\beta_{i1}, \ldots,
\beta_{in})\in \mathbb{N}^n$. Write $\beta=[\beta_{ij}]$,
$\alpha=[\alpha_{ij}]=\beta-I$, where $I$ is the identity matrix
of order $n$. Let $w_d=\det \alpha$, that is,
$$
w_D=\det \alpha=
 \begin{vmatrix}
  \beta_{11}-1 & \beta_{12} & \dots & \beta_{1n}  \\
  \beta_{21} & \beta_{22}-1 & \dots & \beta_{2n}  \\
  \vdots & \vdots &  & \vdots  \\
  \beta_{n1} & \beta_{n2} & \dots & \beta_{nn}-1  \\
 \end{vmatrix}.
$$

Look at the monomial derivation $d $ of $k[x, y, z, t]$ defined by
$$d(x)=t^2, d(y)=zt, d(z)=y^2, d(t)=xy.$$

\begin{theorem}
$d$ has no Darboux
polynomials if and only if $d$ has a trivial field of constants.
\end{theorem}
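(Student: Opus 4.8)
The plan is to prove the nontrivial implication in its contrapositive form: every Darboux polynomial of $d$ produces a nonconstant element of $k(X)^d$, so that the mere existence of a Darboux polynomial forces $k(X)^d\neq k$. (The complementary implication, that a nontrivial field of constants yields a Darboux polynomial, is the elementary fact recalled in the Introduction: writing a nonconstant $d$-constant as $f/g$ in lowest terms, from $g\,d(f)=f\,d(g)$ and $\gcd(f,g)=1$ one gets $f\mid d(f)$ and $g\mid d(g)$, so one of $f,g$ is a Darboux polynomial.) The whole argument rests on exploiting the degeneracy $w_d=0$ to equip $d$ with two different weight gradings and then pinning down the eigenvalue.

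First I would record the two gradings. A direct computation on monomials shows that $d$ is $\gamma$-homogeneous of degree $1$ for the standard weight $\gamma_0=(1,1,1,1)$, since each $d(x_i)$ has total degree $2$. Secondly, with $\alpha=\beta-I$ as in the statement, the hypothesis $w_d=\det\alpha=0$ guarantees a nonzero kernel vector; solving $\alpha\gamma=0$ yields (up to scale) $\gamma_1=(-2,1,2,-1)$, and one checks term by term that $d(x)=t^2,\ d(y)=zt,\ d(z)=y^2,\ d(t)=xy$ have $\gamma_1$-degrees $-2,1,2,-1$, matching those of $x,y,z,t$, so $d$ is $\gamma_1$-homogeneous of degree $0$. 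The weight $\gamma_1$ has negative entries, but the framework $k[X]=\oplus_{s\in\mathbb{Z}}A_\gamma^{(s)}$ and Lemma \ref{C3Darboux homo} remain valid for any integer weight: its proof only compares the top and bottom weighted-homogeneous components of $F$ and of $\Lambda$, which exist because the monomial support is finite.

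Now let $F$ be any Darboux polynomial, $d(F)=\Lambda F$ with $\Lambda\in k[X]$. Applying Lemma \ref{C3Darboux homo} to the grading $\gamma_0$ shows that $\Lambda$ is $\gamma_0$-homogeneous of degree $1$, i.e. a linear form $\Lambda=ax+by+cz+et$. Applying the same lemma to the grading $\gamma_1$ shows that $\Lambda$ is $\gamma_1$-homogeneous of degree $0$. But the $\gamma_1$-degrees of the variables $x,y,z,t$ are $-2,1,2,-1$, all nonzero, so none of the four monomials that can occur in a linear form lies in the $\gamma_1$-piece of degree $0$; the only linear form that is $\gamma_1$-homogeneous of degree $0$ is therefore $0$. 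Hence $\Lambda=0$, so $d(F)=0$. Since $F\notin k$, this exhibits $F$ as a nonconstant element of $k(X)^d$, whence $k(X)^d\neq k$, which is exactly the contrapositive we wanted.

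The decisive step, and the only place the special structure of this example enters, is the production of the second grading $\gamma_1$ from $\ker\alpha$: it is precisely the degeneracy $w_d=0$ that makes a degree-$0$ grading available, and it is the numerical fact that all entries of $\gamma_1$ are nonzero that makes its constraint incompatible with the degree-$1$ constraint coming from $\gamma_0$, thereby annihilating the eigenvalue. The main point to watch is thus the legitimacy of Lemma \ref{C3Darboux homo} for the non-positive weight $\gamma_1$; once that is secured, the remaining verifications are routine linear algebra and monomial bookkeeping.
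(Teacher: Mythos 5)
Your proof is correct, but it follows a genuinely different route from the paper's. The paper also first uses the standard grading to force $\Lambda=k_1x+k_2y+k_3z+k_4t$, but then kills $\Lambda$ by a root-of-unity averaging argument: it introduces the diagonal automorphism $\sigma(x)=\varepsilon^3x$, $\sigma(y)=\varepsilon^5y$, $\sigma(z)=\varepsilon^3z$, $\sigma(t)=\varepsilon t$ ($\varepsilon$ a primitive eighth root of unity), checks $\sigma^{-1}d\sigma=\varepsilon d$, and shows that $\bar F=\prod_{i=0}^{7}\sigma^i(F)$ is a Darboux polynomial with eigenvalue $\bar\Lambda=\sum_{i=0}^{7}\varepsilon^i\sigma^i(\Lambda)$, which vanishes because the exponents $4,6,4,2$ are nonzero modulo $8$; the nonconstant polynomial $\bar F$ then lies in the kernel of $d$. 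Your argument replaces this finite-group averaging by a second $\mathbb{Z}$-grading $\gamma_1=(-2,1,2,-1)\in\ker\alpha$ (I verified $\alpha\gamma_1=0$ and that $d$ is $\gamma_1$-homogeneous of degree $0$), under which no variable has degree $0$, so the linear form $\Lambda$ must vanish outright and the original $F$ itself is a nonconstant constant of $d$ --- a slightly stronger conclusion. The paper's automorphism is in effect a weight system taken modulo $8$, so the two arguments are cousins, but yours makes the role of the degeneracy $w_d=0$ explicit and transparent, at the modest cost of having to justify Lemma~\ref{C3Darboux homo} for a weight vector with negative entries; your justification (comparing extreme $\gamma_1$-homogeneous components, which exist since supports are finite) is sound, and is consistent with the paper's own setup $k[X]=\oplus_{s\in\mathbb{Z}}A_\gamma^{(s)}$ indexed over all of $\mathbb{Z}$. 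Your sketch of the easy direction (from $f/g\in k(X)^d\setminus k$ in lowest terms to $f\mid d(f)$ and $g\mid d(g)$) is also correct and matches what the paper treats as obvious.
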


\begin{proof}
It is obvious that if $d$ is without Darboux polynomials, then the field $k(X)^d$ is trivial.

Now suppose that $k(X)^d$ is trivial.
Assume that $d$ has a Darboux $F$ such that $d(F)=\Lambda F$. Since $d$ is a homogeneous derivation of degree $1$, then by Lemma~\ref{C3Darboux homo},
we have $\Lambda$ is a homogeneous polynomial of degree $1$, thus $\Lambda=k_1x+k_2y+k_3z+k_4t, k_1,\dots,k_4\in k$.

Let $\sigma: k[x,y,z,t]\rightarrow k[x,y,z,t]$ be an automorphism defined by:
$$\sigma(x)=\varepsilon^3 x,~~\sigma(y)=\varepsilon^5y,~~\sigma(z)=\varepsilon^3z, \sigma(t)=\varepsilon t,$$
where  $\varepsilon$   is a primitive eighth root of $1$.
Then $\sigma^{-1}$ is:
$$\sigma^{-1}(x)=\varepsilon^5 x,~~\sigma^{-1}(y)=\varepsilon^3y,~~\sigma(z)^{-1}=\varepsilon^5z, \sigma^{-1}(t)=\varepsilon^7 t.$$

It is easy to verify that $$\sigma^{-1}d\sigma(x)=\sigma^{-1}d(\varepsilon^3x)=\sigma^{-1}(\varepsilon^3t^2)=\varepsilon^{17} t^2=\varepsilon t^2,$$
$$\sigma^{-1}d\sigma(y)=\sigma^{-1}d(\varepsilon^5y)=\sigma^{-1}(\varepsilon^5zt)=\varepsilon^{17} zt=\varepsilon zt,$$
$$\sigma^{-1}d\sigma(z)=\sigma^{-1}d(\varepsilon^3z)=\sigma^{-1}(\varepsilon^3y^2)=\varepsilon^9 y^2=\varepsilon y^2,$$
$$\sigma^{-1}d\sigma(t)=\sigma^{-1}d(\varepsilon t)=\sigma^{-1}(\varepsilon xy)=\varepsilon^9 xy=\varepsilon xy.$$
Thus,
$$\sigma^{-1}d\sigma=\varepsilon d, \text{moreover,} ~~\sigma^{-i}d\sigma^i=\varepsilon^i d.$$

Let
$$\bar{F}=\prod_{i=0}^{7}\sigma^i(F),~~\bar{\Lambda}=\sum_{i=0}^{7}\varepsilon^i\sigma^i(\Lambda).$$

Then
\begin{equation*}
\begin{split}
d(\bar{F})&=d(\prod_{i=0}^{7}\sigma^i(F))=\sum_{i=0}^{7}\sigma^0(F)\cdots d(\sigma^i(F))\cdots \sigma^{7}(F)\\
&=\sum_{i=0}^{7}\sigma^0(F)\cdots \varepsilon^i\sigma^i(d(F))\cdots \sigma^{7}(F)\\
&=\sum_{i=0}^{7}\sigma^0(F)\cdots \varepsilon^i\sigma^i(\Lambda F)\cdots \sigma^7(F)\\
&=\sum_{i=0}^{7}\sigma^0(F)\cdots \varepsilon^i\sigma^i(\Lambda) \sigma^i(F)\cdots \sigma^7(F)\\
&=(\sum_{i=0}^7\varepsilon^i\sigma^i(\Lambda))\prod_{i=0}^7\sigma^i(F)=\bar{\Lambda} \bar{F}.
\end{split}
\end{equation*}

Thus, $\bar{F}$ is a Darboux polynomial of $d$ with  eigenvalue $\bar{\Lambda}$.
Since $\varepsilon$   is a primitive eighth root of $1$, we have
$$\sum_{i=0}^{7}\varepsilon^{ri}=\frac{1-\varepsilon^{8r}}{1-\varepsilon^{r}}=0, \text{for any}~~ r<8.$$
Thus£¬
\begin{equation*}
\begin{split}
\bar{\Lambda}&= \sum_{i=0}^7\varepsilon^i\sigma^i(\Lambda)\\
&=\sum_{i=0}^7\varepsilon^i\sigma^i(k_1x+k_2y+k_3z+k_4t)\\
&=\sum_{i=0}^{7}\varepsilon^i(k_1\varepsilon^{3i}x+k_2\varepsilon^{5i}y+k_3\varepsilon^{3i}z+k_4\varepsilon^it)\\
&=k_1\sum_{i=0}^{7}\varepsilon^{4i}x+k_2\sum_{i=0}^{7}\varepsilon^{6i}y+k_3\sum_{i=0}^{7}\varepsilon^{4i}z+k_4\sum_{i=0}^{7}\varepsilon^{2i}t\\
&=0.
\end{split}
\end{equation*}
Therefore, $D(\bar{F})=0$. It is a contradiction to the fact that $k(X)^d$. Hence, $d$ has no Darboux polynomials.
\end{proof}

\bibliographystyle{amsplain}

\end{document}